\numberwithin{equation}{section} \textwidth=17.5cm
\newtheorem{thm}{Theorem}[section]
\newtheorem{lem}[thm]{Lemma}
\newtheorem{exa}[thm]{Example}
\newtheorem{rem}[thm]{Remark}
\numberwithin{equation}{section}
\begin{document}


\baselineskip=15pt



\title{Geometric characterization  of $L_1$-spaces}

\author[N. Yadgorov]{Normuxammad Yadgorov}
\address{National University of Uzbekistan\\
Vuzgorodok,  100174, Tashkent,    Uzbekistan}
\email{yadgorovm@mail.ru}

\author[M.  Ibragimov]{Mukhtar Ibragimov}
\address{Karakalpak state university\\
  230113 Nukus,    Uzbekistan}
\email{mukhtar\_nukus@mail.ru}

\author[K.  Kudaybergenov]{Karimbergen  Kudaybergenov}
\address{Karakalpak state university\\
  230113 Nukus,    Uzbekistan}
\email{karim2006@mail.ru}

\date{}

\begin{abstract}
The paper is devoted to a description of all strongly facially
symmetric spaces which are isometrically isomorphic to
$L_1$-spaces. We prove that if  $Z$ is a real neutral strongly
facially symmetric space such that  every  maximal geometric
tripotent from the dual space of $Z$ is unitary  then, the space
$Z$ is isometrically isomorphic to the space
  $L_1(\Omega, \Sigma,
\mu),$ where
  $(\Omega, \Sigma,
\mu)$  is an appropriate  measure space having the direct sum  property.
 \end{abstract}

\maketitle

\section{Introduction}

One of the main problem in operator algebras is a geometric
characterization of operator algebras and operator spaces.  In
this connection
 in  papers of Y.~Friedman and B.~Russo the
 so-called facially symmetric spaces were  introduced
(see \cite{fr1, fr2, fr3, fr4, fr5,  MFAT, NR}). In \cite{fr5},
the complete structure of atomic facially symmetric spaces was
determined. More precisely, it was shown that an irreducible,
neutral, strongly facially symmetric space is linearly isometric
to the predual of one of the Cartan factors of types $1$  to $6,$
 provided that it satisfies some natural
and physically significant axioms, four in number, which are known
to hold in the preduals of all $JBW^\ast$-triples.

The project of classifying facially symmetric spaces was started
in \cite{fr4}, where, using two of the pure state properties,
denoted by $STP$ and $FE,$ geometric characterizations of complex
Hilbert spaces and complex spin factors were given. The former is
precisely a rank $1$ $JBW^\ast$-triple and a special case of a
Cartan factor of type $1,$
 and the latter is the Cartan factor of type $4$
  and a special case of a $JBW^\ast$-triple of rank $2.$
   The explicit structure of a spin factor naturally
embedded in a facially symmetric space was then used in \cite{fr5}
to construct abstract generating sets and complete the
classification in the atomic case. In \cite{NR}   a geometric
characterization of
 the dual ball of global  $JB^\ast$-triples was given.

The present paper is devoted to a description of all real strongly
facially symmetric spaces which are  isometrically isomorphic to
$L_1$-spaces. Using Kakutani's characterization of real
$L_1$-spaces, we show that a neutral strongly facially symmetric
space in which every maximal geometric tripotent is unitary, is
isometrically isomorphic to an $L_1$-space. None of the extra
axioms used in \cite{fr4, fr5,NR} are assumed.

\section{Facially symmetric spaces}

In this section we shall recall some basic facts and notation
about facially symmetric spaces (see for details \cite{fr1, fr2,
fr3, fr4, fr5}).

Let $Z$  be a real or complex normed space. Elements $x, y \in Z$
 are \textit{orthogonal}, notation $x \diamondsuit y,$
  if $\|x + y\|=\|x - y\|=\|x\|+\|y\|.$
Subsets   $S, T\subset Z$ are said to be
 \textit{orthogonal}, notation   $(S \diamondsuit T),$ if   $x
\diamondsuit y$ for all   $(x, y)\in S\times T.$
 A~\textit{norm exposed face} of the unit ball
 $Z_1$  of $Z$  is a non-empty set (necessarily $\neq Z_1$)
  of the form
$F=F_u=\{x\in Z: u(x)=1\},$
   where   $u\in Z^\ast,$   $\|u\| =1.$
 Recall that a face $G$  of a convex set $K$  is a non-empty
convex subset of $K$  such that if $\lambda y+(1-\lambda)z\in G,$
where
 $y, z\in K,$ $\lambda
\in (0,1)$ implies    $y, z\in G.$
 In particular, an extreme point of $K$
  is a face of $K.$
   An element $u\in Z^\ast$
    is called  \textit{ a projective unit}
     if $||u||=1$  and
     $\langle u, y\rangle=0$ for all   $y\in F_u^\diamondsuit.$
      Here, for any subset $S,$
      $S^\diamondsuit$
      denotes the set of all elements orthogonal to each element of
      $S.$

A  norm exposed face $F_u$ in $Z_1$ is said to be
\textit{symmetric face} if
 there is a linear isometric symmetry  $S_u$
  of $Z$ onto $Z,$ with $S_u^2
= I$ such that the fixed point set of $S_u$ is
$(\overline{\mbox{sp}}F_{u})\oplus F_{u}^\diamondsuit.$

Recall that a  normed space $Z$
 is said to be \textit{weakly facially symmetric} (WFS) if every norm exposed face in
$Z_1$  is symmetric.

For each symmetric face $F_u$ the contractive projections
$P_k(F_u),$ $k = 0, 1, 2$ on $Z$ defined as follows. First
$P_1(F_u) = (I-S_u)/2$
 is the projection on the $-1$  eigenspace of $S_u.$
  Next  define $P_2(F_u)$
and $P_0(F_u)$
 as the projections of $Z$
  onto $\overline{sp}F_u$ and
    $F_u^\diamondsuit,$  respectively, so that
    $P_2(F_u) +
P_0(F_u) = (I + S_u )/2.$ A \textit{geometric tripotent} is a
projective unit $u$
 with the property that $F_u$
  is a symmetric face and
$S_u^\ast u=u$  for  symmetry $S_u$  corresponding to $u.$
 The projections $P_k(F_u)$
  are called geometric
Peirce projections.

$\mathcal{GT}$  and $\mathcal{SF}$
 denote the collections of geometric tripotents and symmetric faces respectively, and the
map $\mathcal{GT} \ni u \mapsto F_u \in \mathcal{SF}$
 is a bijection \cite[Proposition 1.6]{fr2}.
 For each geometric tripotent $u$ in the dual
of a WFS space $Z,$
 we shall denote the geometric Peirce projections by
 $P_k(u) = P_k(F_u), k = 0, 1, 2.$
  Two elements $f$
 and $g$ of $Z^\ast$ are orthogonal if one of them belongs to
 $P_2(u)^\ast(Z^\ast)$ and the other to $P_0(u)^\ast(Z^\ast)$
for some geometric tripotent $u.$

A contractive projection $Q$
 on a normed space $Z$  is said to be \textit{neutral}
  if for each $x \in Z,$
   $\|Q(x) \|= \|x\|$ implies
$Q(x)=x.$  A normed space $Z$
 is \textit{neutral} if for every symmetric face $F_u,$
  the projection $P_2(F_u)$ is neutral.

 A WFS space $Z$
 is \textit{strongly facially symmetric}
  (SFS) if for every norm exposed face $F_u$ in $Z_1$
   and every
$g \in Z^\ast$ with  $\|g\| = 1$ and  $F_u \subset F_g,$ we have
$S_u^\ast g = g,$ where  $S_u$ denotes a symmetry associated with
$F_u.$

The principal examples of   neutral complex strongly facially
symmetric spaces are preduals of complex $JBW^\ast$-triples, in
particular, the preduals of von Neumann algebras, see \cite{fr3}.
 In these cases, as shown in \cite{fr3}, geometric
tripotents correspond to tripotents in a $JBW^\ast$-triple and to
partial isometries in a von Neumann algebra.

In a neutral strongly facially symmetric space $Z,$
 every non-zero element has a polar decomposition
 \cite[Theorem 4.3]{fr2}:
  for nonzero $x\in Z$
  there exists a unique geometric tripotent $v = v_x$
   with $\langle v, x\rangle  = \|x\|$ and
$\langle v, x^\diamondsuit \rangle = 0.$ If $x, y\in Z,$ then
$x\diamondsuit y$ if and only if $v_x \diamondsuit v_y,$  as
follows from \cite[Corollary 1.3(b) and Lemma 2.1]{fr1}.

A partial ordering can be defined on the set of geometric
tripotents as follows: if $u, v \in \mathcal{GT},$ then $u \leq
v,$ if   $F_u \subset F_v,$ or equivalently (\cite[Lemma
4.2]{fr2}) $P_2(u)^*v = u$ or   $v - u$  is either zero  or a
geometric tripotent orthogonal to   ê $u.$

\section{Main result}

Henceforth   ''face'' means ''norm exposed face''.

Let   $Z$ be a real neutral strongly facially symmetric space. A
geometric tripotent  $u\in \mathcal{GT}$ is said to be

--  \textit{maximal}
 if  $P_0(u)=0;$

--  \textit{unitary} if $P_2(u)=I.$

It is clear that any unitary  geometric tripotent is  maximal.

Notice that a geometric tripotent $e$ is a unitary if and only if
the convex hull of the set $F_e\cup F_{-e}$ coincides with the
unit ball $Z_1,$ i.e.
\begin{align}\label{yad}
Z_1=\mbox{co}\{F_e\cup F_{-e}\}.
\end{align}
Also note that property \eqref{yad}  is a much stronger property
than the Jordan decomposition property of a face (see
\cite[Lemmata 2.3-2.6]{NR}). Recall that a face $F_u$ satisfies
the  Jordan decomposition property if its
 real span coincides with the geometric Peirce $2$-space of the geometric
 tripotent $u.$

\begin{exa} The space  $\mathbb{R}^n$
with the   norm
$$
||x||=\sum\limits_{i=1}^{n}|t_i|,\, x=(t_i)\in \mathbb{R}^n
$$
is a  SFS-space. If  $e\in \mathbb{R}^n\cong (\mathbb{R}^n)^\ast$
is a maximal geometric tripotent then
$$
e=(\varepsilon_1, \varepsilon_2, \ldots, \varepsilon_n),
  \varepsilon_i\in\{-1, 1\}, i\in\overline{1, n},
$$
and in this case the face
$$
F_e=\left\{x\in \mathbb{R}^n:
 \sum\limits_{i=1}^{n}\varepsilon_i  t_i=1,\, \varepsilon_i
 t_i\geq 0, i=\overline{1, n}\right\}
$$
satisfies   \eqref{yad}.

More generally, consider a measure space  $(\Omega, \Sigma, \mu)$
with measure $\mu$  having the direct sum property, i.e. there is
a family
 $\{\Omega_{i}\}_{i\in
J}\subset\Sigma,$ $0<\mu(\Omega_{i})<\infty,\,i\in J,$ such that
for any $A\in\Sigma,\,\mu(A)<\infty,$ there exist a countable
subset $J_{0 }\subset J$ and a set  $B$ with zero measure such
that $A=\bigcup\limits_{i\in J_{0}}(A\cap \Omega_{i})\cup B.$

  Let  $L_1(\Omega, \Sigma, \mu)$ be the  space of all real integrable functions
on $(\Omega, \Sigma, \mu).$ The space $L_1(\Omega, \Sigma, \mu)$
with the   norm
$$
||f||=\int\limits_\Omega|f(t)|\,d\mu(t),\, f\in L_1(\Omega,
\Sigma, \mu)
$$
is a  SFS-space. If  $e\in L^\infty(\Omega, \Sigma, \mu)\cong
L_1(\Omega, \Sigma, \mu)^\ast$ is a maximal geometric tripotent
then
$$
e=\tilde{\chi}_A-\tilde{\chi}_{\Omega\setminus A} \quad\mbox{for
some} \quad  A\in \Sigma,
$$
where  $\tilde{\chi}_A$ is the  class containing  the indicator
function of the set $A\in \Sigma.$
 Then the face
$$
F_e=\left\{f\in L_1(\Omega, \Sigma, \mu):
||f||=1,\,\int\limits_\Omega e(t)f(t)\,d\mu(t)=1\right\}
$$
satisfies   \eqref{yad}.
\end{exa}

The next result is the main result of the paper,  giving a
description of all strongly facially symmetric spaces which are
isometrically isomorphic to   $L_1$-spaces.

\begin{thm}\label{MTH}
Let  $Z$ be a real neutral strongly facially symmetric space such
that every maximal geometric tripotent from   $Z^\ast$ is unitary.
Then there exits
 a measure space  $(\Omega, \Sigma,
\mu)$ with measure $\mu$  having the direct sum  property such
that the space  $Z$ is  isometrically isomorphic to the space
  $L_1(\Omega, \Sigma,
\mu).$
\end{thm}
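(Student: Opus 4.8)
The plan is to use Kakutani's characterization of real $L_1$-spaces, which states that a real Banach space is isometrically isomorphic to some $L_1(\Omega,\Sigma,\mu)$ if and only if it is an abstract $L$-space (an $L$-space is a Banach lattice in which $\|x+y\|=\|x\|+\|y\|$ whenever $x,y\ge 0$). So the central task is to manufacture, purely from the facial-symmetry data and the unitarity hypothesis, a lattice order on $Z$ under which the norm is additive on the positive cone; the direct-sum property of the resulting measure then comes for free from the standard form of Kakutani's theorem.

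My strategy is to exploit the decomposition $Z_1=\mathrm{co}\{F_e\cup F_{-e}\}$ from \eqref{yad}, which holds for every maximal (hence, by hypothesis, unitary) geometric tripotent $e$. The first step is to fix such a unitary $e$ and show that it induces a global splitting of $Z$: every $x\in Z$ with $\|x\|=1$ can be written as $x=\lambda f - (1-\lambda)g$ with $f\in F_e$, $g\in F_e$ (equivalently $-g\in F_{-e}$) and $\lambda\in[0,1]$, and I would use neutrality together with the polar decomposition \cite[Theorem 4.3]{fr2} to show this decomposition is \emph{unique} and that the two pieces are orthogonal in the sense $\diamondsuit$. This is the analogue of the Jordan/Hahn decomposition of a measure into positive and negative parts. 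Declaring $x\ge 0$ when its negative part vanishes (i.e. when $\langle e,x\rangle=\|x\|$) defines a cone $Z_+ = \{x : \langle e,x\rangle = \|x\|\}\cup\{0\}$; the additivity of the norm on this cone, $\|x+y\|=\|x\|+\|y\|$ for $x,y\in Z_+$, should follow directly because $\langle e,\cdot\rangle$ is linear and norm-attaining on $Z_+$, giving $\|x+y\|\ge\langle e,x+y\rangle=\|x\|+\|y\|\ge\|x+y\|$.

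The harder half is upgrading this cone to a genuine \emph{lattice} order, i.e. producing least upper bounds, or equivalently verifying the Riesz decomposition property; additivity of the norm on a cone does not by itself give an $L$-space. Here I would build the lattice operations out of the geometric Peirce projections $P_k(u)$ associated to the geometric tripotents $u\le e$. Concretely, each $u\in\mathcal{GT}$ with $u\le e$ should correspond to a "band projection," and the family of such tripotents under the partial order $\le$ from \cite[Lemma 4.2]{fr2} should form a Boolean-algebra-like structure (via the orthogonality relation $v-u$ being a geometric tripotent orthogonal to $u$) that plays the role of the measurable sets $\Sigma$. The key geometric input is that since $e$ is unitary, $P_2(e)=I$, so the Peirce projections genuinely decompose all of $Z$ and there are "enough" tripotents below $e$ to separate points. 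I expect the main obstacle to be precisely this step: showing that the contractive, neutral projections coming from subtripotents of $e$ assemble into a complete enough lattice of band projections to force the Riesz decomposition property, and then checking disjointness (lattice orthogonality) coincides with the geometric $\diamondsuit$-orthogonality.

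Once the lattice structure and norm-additivity are in hand, the final step is routine: verify $Z$ is norm-complete as an $L$-space (completeness of $Z$ itself should be assumed or follow from the SFS axioms together with neutrality), invoke Kakutani's theorem to obtain $Z\cong L_1(\Omega,\Sigma,\mu)$ for some measure space, and then identify the direct-sum property of $\mu$ with the structure of the maximal orthogonal family of minimal tripotents (atoms) below the unitary $e$ — the sets $\Omega_i$ correspond to a maximal orthogonal family of geometric tripotents, and their finite measures $\mu(\Omega_i)$ correspond to the norms of the associated positive elements. The correspondence $e=\tilde\chi_A-\tilde\chi_{\Omega\setminus A}$ from the Example is the template: the unitary $e$ I fixed at the outset plays the role of the $\pm 1$-valued function, and the whole argument is, in effect, a coordinate-free reconstruction of that picture.
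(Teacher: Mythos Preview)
Your strategic outline coincides with the paper's: fix a unitary tripotent $e$, declare $Z_+=\mathbb{R}^+F_e$ (equivalently $x\ge 0\Leftrightarrow\langle e,x\rangle=\|x\|$), read off norm-additivity on $Z_+$ from linearity of $\langle e,\cdot\rangle$, and finish with Kakutani. The differences are in the two technical steps you flagged, and one side remark is actually wrong.

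For the Jordan decomposition, you plan to pull orthogonality of the pieces directly out of the convex split $x=\lambda f-(1-\lambda)g$ via neutrality; this is not so immediate. The paper instead first decomposes the \emph{support tripotent}: it shows (Lemma~\ref{cap}, Lemma~\ref{JDM}) that for every $u\in\mathcal{GT}$ there exist orthogonal $u_1,u_2\le e$ with $u=u_1-u_2$, and then sets $x_+=P_2(v_1)(x)$, $x_-=-P_2(v_2)(x)$ for the polar tripotent $v_x=v_1-v_2$. A crucial intermediate fact you do not mention is that the unitarity hypothesis forces $P_1(u)=0$ for \emph{every} $u\in\mathcal{GT}$ (Lemma~\ref{P=0}); this is what makes the Peirce-$2$ projections of orthogonal tripotents add up, $P_2(v_1+v_2)=P_2(v_1)+P_2(v_2)$, and hence what makes $x=x_+-x_-$ with $x_+\diamondsuit x_-$.

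For the lattice structure, your proposed route through a Boolean algebra of band projections is plausible but unnecessarily heavy. Once the orthogonal decomposition $x-y=a-b$ with $a,b\ge 0$, $a\diamondsuit b$ is in hand, the paper simply \emph{writes down} the lattice operations: $x\vee y=\tfrac12(x+y+a+b)$ and $x\wedge y=\tfrac12(x+y-a-b)$, and checks directly (Lemma~\ref{latt}) that these are sup and inf. No Riesz decomposition or band machinery is needed.

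Finally, your last paragraph is incorrect: you cannot identify the direct-sum property with a maximal orthogonal family of \emph{minimal} tripotents, because $Z$ need not be atomic (e.g.\ $L_1[0,1]$ has no minimal tripotents at all). You already said it right the first time: the direct-sum property is part of the conclusion of the Kakutani/Lindenstrauss--Tzafriri representation theorem and requires no further argument.
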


For the proof we need several lemmata.

Let  $u, v\in \mathcal{GT}.$
 If  $F_u\cap F_v\neq \emptyset$
then by  $u\wedge v$ we denote the   unique geometric tripotent
such that
  $F_{u\wedge v}=F_u\cap F_v,$
  otherwise we set
$u\wedge v =0.$

\begin{lem}\label{cap}
Let  $e$ be   unitary   and let  $v \in \mathcal{GT}.$ Then
$F_v\cap F_e\neq\emptyset$ or  $F_{-v}\cap F_e\neq\emptyset.$
 \end{lem}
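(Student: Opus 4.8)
The plan is to exploit the unitarity of $e$ through the characterization \eqref{yad}, namely $Z_1=\mbox{co}\{F_e\cup F_{-e}\}$. Since $F_v$ is a norm exposed face it is non-empty, so I would begin by fixing an arbitrary $w\in F_v$, which satisfies $\langle v,w\rangle=1$ and $\|w\|=1$. The goal is to show that either $w$ itself, after collapsing a convex decomposition, lands in $F_e$, or else its reflection $-w$ does.

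The first genuine step is to rewrite $w$ as a two-term convex combination drawn from the two faces. By \eqref{yad}, $w$ is a finite convex combination of points of $F_e$ and points of $F_{-e}$; grouping the $F_e$-terms and the $F_{-e}$-terms separately and using that $F_e$ and $F_{-e}$ are convex (being faces), I would collapse this to $w=\lambda p+(1-\lambda)q$ with $p\in F_e$, $q\in F_{-e}$, and $\lambda\in[0,1]$, where one of the two points is simply absent when $\lambda=0$ or $\lambda=1$.

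The heart of the argument is then an extremality computation. Applying $v$ gives $1=\langle v,w\rangle=\lambda\langle v,p\rangle+(1-\lambda)\langle v,q\rangle$, while $\|v\|=1$ together with $\|p\|=\|q\|=1$ (both lie on the unit sphere, since $\langle e,p\rangle=1$ and $\langle -e,q\rangle=1$ force their norms up to $1$) yields $\langle v,p\rangle\leq 1$ and $\langle v,q\rangle\leq 1$. A convex combination of two numbers each $\leq 1$ can equal $1$ only if every term carrying positive weight already equals $1$. Hence, if $\lambda>0$ then $\langle v,p\rangle=1$, so $p\in F_v\cap F_e$ and the first alternative holds.

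It remains to treat the degenerate case $\lambda=0$, where $w=q\in F_{-e}$ and $\langle v,q\rangle=1$. Here I would pass to $-w$: from $\langle e,w\rangle=-1$ we get $-w\in F_e$, and from $\langle v,w\rangle=1$ we get $\langle -v,-w\rangle=1$, so $-w\in F_{-v}$; thus $-w\in F_{-v}\cap F_e$ and the second alternative holds. I expect the only points requiring care to be the reduction of the multi-term convex combination to a single pair $(p,q)$ and the bookkeeping of the endpoint cases $\lambda\in\{0,1\}$; once the decomposition is in place the extremality step is routine.
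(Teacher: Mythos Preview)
Your proof is correct and follows essentially the same route as the paper: pick a point of $F_v$, decompose it via \eqref{yad} as a convex combination of a point in $F_e$ and a point in $F_{-e}$, and then use extremality to push one of the summands into $F_v$ (or $F_{-v}$). The only cosmetic difference is that the paper invokes directly that $F_v$ is a face of $Z_1$ to conclude $p,q\in F_v$ when $0<\lambda<1$, whereas you unpack this via the equation $1=\lambda\langle v,p\rangle+(1-\lambda)\langle v,q\rangle$; these are the same observation, since the face property of $F_v$ is exactly this extremality of the functional $v$ on $Z_1$.
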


\begin{proof}
Let  $x\in  F_v.$ By  equality  \eqref{yad} we obtain that
$$
x=ty+(1-t)z
$$
for some
 $y,\, -z\in F_e$ and $0\leq t \leq1.$

If  $t=1$ or $t=0$ then  $x=y$ or $x=z,$ respectively.  Hence
$x\in F_{v}\cap F_e$ or $-x\in F_{-v}\cap F_e.$

Let $0<t<1.$ Since $F_v$ is a  face,  $y, z\in F_v.$ Therefore
$F_v\cap F_e\neq\emptyset$ and  $F_{-v}\cap F_e\neq\emptyset.$ The
proof is complete.
\end{proof}

 \begin{lem}\label{JDM}
 Let  $e$ be  unitary. Then for every  $u \in \mathcal{GT}$
  there exist  mutually orthogonal
geometric tripotents $u_1, u_2\leq e$ such that  $u=u_1-u_2.$
\end{lem}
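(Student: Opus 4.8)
The plan is to take as candidates the two natural meets with $e$, namely $u_1:=u\wedge e$ and $u_2:=(-u)\wedge e$. Since $F_{u_1}=F_u\cap F_e\subseteq F_e$ and $F_{u_2}=F_{-u}\cap F_e\subseteq F_e$, the defining description of the partial order gives $u_1\leq e$ and $u_2\leq e$ whenever these meets are nonzero. Lemma \ref{cap} applied to $v=u$ guarantees that at least one of $F_u\cap F_e$, $F_{-u}\cap F_e$ is nonempty, so at least one of $u_1,u_2$ is nonzero. I would first dispose of the degenerate cases: if $u_1=0$, that is $F_u\cap F_e=\emptyset$, then running the decomposition \eqref{yad} on an arbitrary $x\in F_{-u}$ and using that $F_{-u}$ is a face forces the $F_{-e}$-component to produce a point of $F_u\cap F_e$ unless it is absent, whence $F_{-u}\subseteq F_e$, i.e. $-u\leq e$ and $u_2=-u$; then $u=u_1-u_2$ holds trivially. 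The case $u_2=0$ is symmetric, so I may assume $u_1,u_2\neq 0$.

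The heart of the argument is the orthogonality of the two faces, and this is the clean part. Take $y\in F_{u_1}$ and $w\in F_{u_2}$, so that $\langle u,y\rangle=\langle e,y\rangle=1$, while $\langle u,w\rangle=-1$ and $\langle e,w\rangle=1$, with $\|y\|=\|w\|=1$. Then $\|y-w\|\geq\langle u,y-w\rangle=2$ and $\|y+w\|\geq\langle e,y+w\rangle=2$, while both are at most $\|y\|+\|w\|=2$; hence $\|y+w\|=\|y-w\|=\|y\|+\|w\|$, that is $y\diamondsuit w$. Thus $F_{u_1}\diamondsuit F_{u_2}$, i.e. $F_{u_1}\subseteq F_{u_2}^\diamondsuit$ and $F_{u_2}\subseteq F_{u_1}^\diamondsuit$. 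Because $u_1$ and $u_2$ are projective units, this immediately yields the vanishing cross terms $\langle u_2,y\rangle=0$ for every $y\in F_{u_1}$ and $\langle u_1,w\rangle=0$ for every $w\in F_{u_2}$.

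Next I would promote the orthogonality of faces to orthogonality of the tripotents themselves. Choosing $y\in F_{u_1}$ and $w\in F_{u_2}$ with full polar support, so that $v_y=u_1$ and $v_w=u_2$ (which the polar decomposition \cite[Theorem 4.3]{fr2} makes available), the relation $y\diamondsuit w$ together with the equivalence $x\diamondsuit y\Leftrightarrow v_x\diamondsuit v_y$ from \cite[Corollary 1.3(b), Lemma 2.1]{fr1} gives $u_1\diamondsuit u_2$. Consequently $u_1-u_2=u_1+(-u_2)$ is a sum of orthogonal geometric tripotents, hence again a geometric tripotent. This promotion step is the main obstacle: the norm computation only sees orthogonality at the level of individual points of the faces, and passing to the exposing tripotents requires producing points that realize the full tripotent as their polar support.

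It remains to identify $F_u$ with $F_{u_1-u_2}$; injectivity of the correspondence $\mathcal{GT}\ni w\mapsto F_w$ \cite[Proposition 1.6]{fr2} will then give $u=u_1-u_2$. For $F_u\subseteq F_{u_1-u_2}$, take $x\in F_u$ and write $x=ty+(1-t)z$ with $y,-z\in F_e$ via \eqref{yad}; since $F_u$ is a face, $y,z\in F_u$, so $y\in F_u\cap F_e=F_{u_1}$ and, putting $w=-z$, $w\in F_{-u}\cap F_e=F_{u_2}$. The vanishing cross terms give $\langle u_1-u_2,x\rangle=t\langle u_1-u_2,y\rangle-(1-t)\langle u_1-u_2,w\rangle=t\cdot 1-(1-t)(-1)=1$, so $x\in F_{u_1-u_2}$, the boundary values $t\in\{0,1\}$ being immediate. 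For the reverse inclusion I would first record that $u_1,u_2\leq e$ forces $0\leq\langle u_i,p\rangle\leq 1$ for every $p\in F_e$, since $e-u_i$ is $0$ or a geometric tripotent and therefore $\langle u_i,p\rangle=\langle e,p\rangle-\langle e-u_i,p\rangle\geq 1-1=0$. Then for $x\in F_{u_1-u_2}$, decomposing $x=ty+(1-t)z$ with $y\in F_e$, $-z\in F_e$ and using that $F_{u_1-u_2}$ is a face, the constraint $\langle u_1,y\rangle-\langle u_2,y\rangle=1$ with both values in $[0,1]$ forces $\langle u_1,y\rangle=1$, hence $y\in F_{u_1}\subseteq F_u$; symmetrically $-z\in F_{u_2}\subseteq F_{-u}$, so $z\in F_u$, and convexity of $F_u$ gives $x\in F_u$. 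Thus $F_u=F_{u_1-u_2}$ and $u=u_1-u_2$.
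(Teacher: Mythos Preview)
Your proof is correct and begins exactly as the paper's: the same candidates $u_1=u\wedge e$, $u_2=(-u)\wedge e$, and essentially the same norm computation showing $F_{u_1}\diamondsuit F_{u_2}$ (the paper phrases it via the midpoints $\tfrac{x_1\pm x_2}{2}\in F_e,F_u$). You are also more explicit than the paper about two points it leaves implicit: the degenerate cases $u_1=0$ or $u_2=0$, and the passage from $F_{u_1}\diamondsuit F_{u_2}$ to $u_1\diamondsuit u_2$.

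Where you genuinely diverge is in the identification $u=u_1-u_2$. You prove $F_u=F_{u_1-u_2}$ by establishing both inclusions, decomposing an arbitrary point via \eqref{yad} and, for the reverse inclusion, using the inequality $0\le\langle u_i,p\rangle\le 1$ on $F_e$. The paper instead argues by contradiction: setting $v=u-u_1+u_2$ and assuming $v\neq 0$, it invokes Lemma~\ref{cap} to produce $x\in F_v\cap F_e$ (or in $F_{-v}\cap F_e$); since $v\le u$ this forces $x\in F_u\cap F_e=F_{u_1}$, whence $\langle u_1,x\rangle=1$, $\langle u_2,x\rangle=0$, and $\langle v,x\rangle=\langle u,x\rangle-\langle u_1,x\rangle+\langle u_2,x\rangle=0$, contradicting $x\in F_v$. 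The paper's route is considerably shorter and reuses Lemma~\ref{cap} a second time rather than introducing the auxiliary positivity bound; your route is more constructive and makes the equality of faces visible, at the cost of length.
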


\begin{proof}
Put
\begin{gather*}
u_1=u\wedge e, \,  u_2=(-u)\wedge e.
\end{gather*}

Let us prove that
\begin{gather*}
u_1\diamondsuit u_2,\, u=u_1-u_2.
\end{gather*}
Let  $x_1\in F_{u_1}$ and $x_2\in F_{u_2}.$ Then
$$
x_1, x_2 \in F_e,\,\, x_1, -x_2\in F_{u},
$$
and therefore
$$
\frac{x_1+x_2}{2}\in F_e, \frac{x_1-x_2}{2}\in F_{u}.
$$
Thus
$$
\left\|\frac{x_1+x_2}{2}\right\|=1,
\left\|\frac{x_1-x_2}{2}\right\|=1,
$$
and
$$
||x_1+x_2||=||x_1-x_2||=2=||x_1||+||x_2||.
$$
Hence   $x_1\diamondsuit x_2,$ and therefore   $u_1\diamondsuit
u_2.$

Now suppose that  $v=u-u_1+u_2\neq 0.$ By Lemma  \ref{cap} we know
that  that    $F_v\cap F_e\neq\emptyset$ or  $F_{-v}\cap
F_e\neq\emptyset.$   Without loss of generality it can be assumed
that $F_v\cap F_e\neq\emptyset.$ Thus there exists an element
$x\in Z_1$ such that
$$
\langle v, x\rangle=\langle e, x\rangle=1.
$$
  Since
$v\leq u,$ we have   $\langle u, x\rangle=1.$ Thus  $x\in F_u\cap
F_e,$ i.e. $x\in F_{u_1}$ or  $\langle u_1, x\rangle=1.$ Since
$u_1\diamondsuit u_2,$ we have
 $\langle u_2, x\rangle=0.$ Hence
$$
\langle v, x\rangle= \langle u, x\rangle- \langle u_1, x\rangle+
\langle u_2, x\rangle =0,
$$
 a contradiction. The proof is complete.
\end{proof}

\begin{lem}\label{max}
Let  $u, w$ be orthogonal geometric tripotents. Then $u+w$ is
maximal if and only if $u-w$ is  maximal.
\end{lem}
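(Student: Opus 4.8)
The plan is to prove both directions of the equivalence by exploiting the symmetry between $u+w$ and $u-w$, which arises from the fact that $u-w = u+(-w)$ and $-w$ is again a geometric tripotent orthogonal to $u$. Since the roles of $w$ and $-w$ are interchangeable, it suffices to prove one implication: if $u+w$ is maximal, then $u-w$ is maximal. I would first unwind what maximality means here. By definition $u+w$ is maximal iff $P_0(u+w)=0$, and by the characterization in the excerpt this is equivalent to saying there is no nonzero geometric tripotent orthogonal to $u+w$. Equivalently, using the partial ordering, $u+w$ is maximal iff it admits no strict majorant, i.e. no geometric tripotent $t$ with $u+w < t$.

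\smnoind
The key structural fact to extract is that $F_{u+w}$ and $F_{u-w}$ are intimately related through the orthogonality of $u$ and $w$. I would first verify that $u+w$ is genuinely a geometric tripotent: since $u\diamondsuit w$, the element $u+w$ should be a geometric tripotent with $F_{u+w}$ describable in terms of $F_u$ and $F_w$ (this kind of additivity for orthogonal geometric tripotents is standard in the facially symmetric framework and follows from the Peirce decomposition). The heart of the argument is then to suppose $u-w$ is not maximal and derive that $u+w$ is not maximal. So assume there is a nonzero geometric tripotent $p$ with $p\diamondsuit(u-w)$. I would like to transport $p$ by the symmetry associated with $w$ (or with $u+w$) to produce a geometric tripotent orthogonal to $u+w$.

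\smnoind
Concretely, the plan is to use the symmetry $S_w$ (the isometric symmetry whose fixed-point set is $\overline{\mathrm{sp}}F_w \oplus F_w^\diamondsuit$). Since $u\diamondsuit w$, we have $u\in F_w^\diamondsuit$ in the appropriate sense, so $S_w$ fixes $u$ and sends $w$ to $-w$; hence $S_w^\ast$ should map the geometric tripotent $u+w$ to $u-w$. Geometric Peirce projections transform naturally under such symmetries, so $P_0(u+w)$ and $P_0(u-w)$ are conjugate via $S_w$, giving $P_0(u+w)=0 \iff P_0(u-w)=0$ immediately. This is the cleanest route: establish that $S_w^\ast(u+w)=u-w$ and that the symmetry intertwines the Peirce-$0$ projections, so that maximality is preserved.

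\smnoind
The main obstacle I anticipate is justifying rigorously that the symmetry $S_w$ sends $u+w$ to $u-w$ at the level of geometric tripotents, and that conjugation by $S_w$ carries $P_0(u+w)$ to $P_0(u-w)$. The subtlety is that $S_w$ is defined via the face $F_w$, and one must check that $u$, being orthogonal to $w$, lies in the $+1$ fixed-point set of $S_w$ while $w$ lies in the $-1$ eigenspace, using the definition of orthogonality $u\diamondsuit w$ together with the relation between orthogonality of elements and the Peirce decomposition recorded earlier in the excerpt. Once the action $S_w^\ast(u+w)=u-w$ is confirmed, the transformation law for geometric Peirce projections under isometric symmetries closes the argument symmetrically, yielding both implications at once.
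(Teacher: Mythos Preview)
Your approach has a genuine gap: the symmetry $S_w$ does \emph{not} send $w$ to $-w$. By the very definition of a geometric tripotent recorded in the preliminaries, $S_w^\ast w = w$. More structurally, the $-1$-eigenspace of $S_w$ is the Peirce-$1$ space $P_1(w)(Z)$, while both $\overline{\mathrm{sp}}\,F_w$ and $F_w^\diamondsuit$ lie in the $+1$-eigenspace. Dually, $w\in P_2(w)^\ast(Z^\ast)$ is fixed by $S_w^\ast$, and since $u\diamondsuit w$ forces $u\in P_0(w)^\ast(Z^\ast)$, the tripotent $u$ is fixed by $S_w^\ast$ as well. Hence $S_w^\ast(u+w)=u+w$, not $u-w$, and the intended conjugation argument collapses. (In fact, once Lemma~\ref{P=0} is proved later in the paper, every $S_v$ is the identity, which shows there is no hope of finding a facial symmetry of this type that flips the sign of $w$.)

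The paper's proof avoids symmetries entirely and argues directly with the order on $\mathcal{GT}$: assuming $u-w$ is not maximal, pick a maximal $e>u-w$ and set $w_1=e-(u-w)$. Then $w_1$ is a nonzero geometric tripotent orthogonal to $u-w$, hence orthogonal to both $u$ and $w$ separately, so $u+w<u+w+w_1$, contradicting maximality of $u+w$. Your opening observation that the two implications are symmetric under $w\leftrightarrow -w$ is correct and is exactly why only one direction needs to be argued; it is the mechanism for that one direction that needs to be replaced.
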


\begin{proof}
Let $u+w$ be   maximal. Suppose that $u-w$ is not maximal. Then
there exists a maximal geometric tripotent
 $e$ such that
$e> u-w.$ Set $w_1=e-u+w.$ Then $w_1\diamondsuit u$ and
$w_1\diamondsuit w.$ Therefore $u+w<u+w+w_1.$ This contradicts the
 maximality of $u+w.$
 The
proof is complete.
\end{proof}

Recall   that a  face $F$ of a convex set $K$ is called
\textit{split face} if there exists a face $G$ ($F\cap
G=\emptyset$), called complementary to $F,$ such that $K$ is the
direct convex sum $F\oplus_{c}G;$ i.e. any element $x\in K$ can be
uniquely represented in the form $x=t y+ (1-t)z,$ where $t\in [0,
1],$ $y\in F,$ $z\in G$ (see e.g. \cite[P.~420]{AS}, \cite{AY}).

\begin{lem}\label{decom}
Let  $u, w$ be orthogonal geometric tripotents. If $u+w$ is
maximal then
\begin{align}\label{deco}
F_{u+w}= F_u\oplus_{c} F_w.
\end{align}
\end{lem}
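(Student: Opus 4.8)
The plan is to exploit the fact that, under the standing hypothesis of Theorem \ref{MTH}, maximality coincides with being unitary, so that $u+w$ is unitary and, by Lemma \ref{max}, $u-w$ is unitary as well. Each of these unitaries supplies a global convex decomposition of the unit ball through \eqref{yad}, and the whole argument is built on playing these two decompositions against each other.

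First I would record the routine structural facts. Since $u\diamondsuit w$, both $w=(u+w)-u$ and $-w=(u-w)-u$ are geometric tripotents orthogonal to $u$, so $u\le u+w$ and $u\le u-w$; symmetrically $w\le u+w$ and $w\le w-u$. Hence $F_u,F_w\subset F_{u+w}$, and $F_u,F_w$ are faces of $F_{u+w}$. They are disjoint: if $\xi\in F_u\cap F_w$ then $\langle u+w,\xi\rangle=2>1=\|u+w\|\,\|\xi\|$, which is impossible. Using the polar decomposition together with the equivalence ``$x\diamondsuit y$ iff $v_x\diamondsuit v_y$'', I would also check $F_w\subset F_u^\diamondsuit$ and $F_u\subset F_w^\diamondsuit$: for $z\in F_w$, $y\in F_u$ one has $v_z\le w$ and $v_y\le u$, subtripotents of orthogonal tripotents are orthogonal, so $v_z\diamondsuit v_y$ and therefore $z\diamondsuit y$.

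The heart of the proof is existence of the decomposition, and this is where Lemma \ref{max} is essential. Given $x\in F_{u+w}$, I apply \eqref{yad} to the unitary $u-w$ to write $x=sp+(1-s)q$ with $p\in F_{u-w}$, $q\in F_{-(u-w)}=F_{w-u}$, $s\in[0,1]$. Because $\langle u+w,p\rangle\le 1$ and $\langle u+w,q\rangle\le 1$ while $s\langle u+w,p\rangle+(1-s)\langle u+w,q\rangle=\langle u+w,x\rangle=1$, for $0<s<1$ both pairings must equal $1$, i.e. $p\in F_{u+w}\cap F_{u-w}$ and $q\in F_{u+w}\cap F_{w-u}$. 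I then claim $F_{u+w}\cap F_{u-w}=F_u$ and $F_{u+w}\cap F_{w-u}=F_w$: the inclusions $\supset$ come from $u\le u\pm w$ and $w\le u+w$, $w\le w-u$ above, while for $\subset$, if $\langle u+w,\xi\rangle=\langle u-w,\xi\rangle=1$ then adding gives $\langle u,\xi\rangle=1=\|\xi\|$, so $\xi\in F_u$ (symmetrically for $w$). Thus $p\in F_u$, $q\in F_w$, and $x=sp+(1-s)q$ is the required representation; the boundary cases $s\in\{0,1\}$ place $x$ directly in $F_w$ or $F_u$.

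For uniqueness, suppose $x=ty+(1-t)z$ with $y\in F_u$, $z\in F_w$, $t\in[0,1]$. Pairing with $u$, and using that $u$ is a projective unit with $F_w\subset F_u^\diamondsuit$, gives $\langle u,y\rangle=1$ and $\langle u,z\rangle=0$, so $t=\langle u,x\rangle$ is determined by $x$ alone and every decomposition shares this weight. When $0<t<1$, applying the geometric Peirce projection $P_2(u)$, which restricts to the identity on $\overline{\mbox{sp}}F_u$ and annihilates $F_u^\diamondsuit$, yields $P_2(u)x=t\,y$, so $y=t^{-1}P_2(u)x$; symmetrically $P_2(w)x=(1-t)z$ pins down $z$. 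This establishes \eqref{deco}. The step I expect to be delicate is precisely the existence argument: everything hinges on recognizing that Lemma \ref{max} forces $u-w$ to be unitary, so the decomposition of $x$ coming from \eqref{yad} automatically lands inside $F_{u+w}$, together with the two intersection identities $F_{u+w}\cap F_{\pm(u-w)}=F_u,F_w$ that convert that decomposition into the split-face decomposition.
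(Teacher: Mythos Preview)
Your proof is correct and follows essentially the same approach as the paper: both invoke Lemma~\ref{max} to make $u-w$ unitary, decompose an arbitrary $x\in F_{u+w}$ via \eqref{yad} for $u-w$, and then identify the two pieces with $F_u$ and $F_w$ through the intersections $F_{u+w}\cap F_{\pm(u-w)}$. The only cosmetic differences are that in the case $0<s<1$ you use the face property of $F_{u+w}$ to force $p,q\in F_{u+w}$ before applying the intersection identities, whereas the paper reaches $u(y)=w(z)=1$ by summing two linear equations, and you spell out uniqueness via Peirce projections where the paper simply cites $F_u\diamondsuit F_w$.
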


\begin{proof}
First we shall  show that
$$
F_{u+w}=\mbox{co}\{F_u\cup F_w\}.
$$
It  suffices to show that
$$
F_{u+w}\subseteq  \mbox{co}\{F_u\cup F_w\}.
$$
By Lemma~\ref{max} the geometric tripotent $u-w$ is  maximal.
Therefore the face $F_{u-w}$ satisfies equality \eqref{yad}, i.e.
$$
Z_1=\mbox{co}\{F_{u-w}\cup F_{w-u}\}.
$$
Thus every
 element $x\in F_{u+w}$ has the form
 $$
x=ty+(1-t)z
$$
for some
 $y, -z \in F_{u-w}$ and $0\leq t \leq1.$

Consider the following three cases.

Case 1.  If $t=0$ then $x\in F_{u+w}\cap F_{w-u}=F_w.$

Case 2.  If $t=1$ then $x\in F_{u+w}\cap F_{u-w}=F_u.$

Case 3. If $0<t<1$ then applying  the geometric tripotent $u+w$ to
the equality $x=ty+(1-t)z$ we obtain
\begin{align}\label{equal}
t u(y)+t w(y)+(1-t) u(z)+ (1-t) w(z)=1.
\end{align}
Since
 $y\in F_{u-w}$ and $z\in F_{w-u}$ we see  that
 $$
u(y)-w(y)=1,\, w(z)-u(z)=1.
 $$
Thus
\begin{align}\label{equa}
t u(y)-t w(y)-(1-t) u(z)+ (1-t) w(z)=1.
\end{align}
Summing \eqref{equal} and \eqref{equa} we get
$$
t u(y)+(1-t) w(z)=1.
$$
Since $|u(y)|\leq 1$ and $|w(z)|\leq 1$ the last equality implies
that
$$
u(y)=w(z)=1.
$$
This means that
 $y\in F_{u}$ and $z\in F_{w}.$
Therefore
 $$
x=ty+(1-t)z\in \mbox{co}\{F_u\cup F_w\}.
$$
Consequently $F_{u+w}=\mbox{co}\{F_u\cup F_w\}.$  Taking into
account that $F_u\diamondsuit  F_w$ we get  $F_{u+w}=
F_u\oplus_{c} F_w.$ The proof is complete.
\end{proof}

Let  $u$ be an arbitrary geometric tripotent and let $e$ be a
maximal geometric tripotent  such that $u\leq e.$ First we shall
show that
$$
Z=\overline{\mbox{sp}} F_u \oplus \overline{\mbox{sp}} F_w,
$$
where $w=e-u.$ Using equalities \eqref{yad} and \eqref{deco} we
obtain
\begin{align*}
 Z & = \mbox{sp} Z_1=\mbox{sp}\{\mbox{co}\{F_{e}\cup F_{-e}\}\}=\\
& = \mbox{sp} F_{e}=\mbox{sp}\{F_u\oplus_{c}  F_w\}=\mbox{sp}
F_u\oplus \mbox{sp} F_w,
\end{align*} i.e.
$$
Z=\mbox{sp} F_u \oplus \mbox{sp} F_w.
$$
From    $\mbox{sp} F_u\diamondsuit  \mbox{sp} F_w$ it follows that
$\overline{\mbox{sp}} F_u\diamondsuit  \overline{\mbox{sp}} F_w,$
and therefore
$$
Z=\overline{\mbox{sp}} F_u \oplus \overline{\mbox{sp}} F_w.
$$
This implies that
$$ P_2(u)+P_2(w)=I.
$$
Since  $P_1(u)P_0(u)=0$ and $P_2(w)=P_0(u)P_2(w)$ (see
\cite[Corollary 3.4]{fr2})
 we obtain $P_1(u)P_2(w)=0.$ Therefore
\[
P_1(u)=P_1(u)I=P_1(u)[P_2(u)+P_2(w)]=0.
\]
 So we have

\begin{lem}\label{P=0}
For every    $u\in \mathcal{GT}$ the projection   $P_1(u)=0$ is
zero.
\end{lem}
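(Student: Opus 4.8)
The plan is to dominate an arbitrary geometric tripotent $u$ by a maximal one and then convert the split-face decomposition of Lemma~\ref{decom} into an identity between Peirce projections. First I would fix $u\in\mathcal{GT}$ and choose a maximal geometric tripotent $e$ with $u\leq e$; the existence of such an $e$ follows from a Zorn's lemma argument applied to chains in the ordered set $(\mathcal{GT},\leq)$, and by the standing hypothesis of Theorem~\ref{MTH} this maximal $e$ is automatically unitary, so \eqref{yad} holds for $e$. Setting $w=e-u$, the relation $u\leq e$ together with \cite[Lemma 4.2]{fr2} shows that $w$ is either $0$ or a geometric tripotent orthogonal to $u$. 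The case $w=0$ gives $u=e$ unitary, whence $P_2(u)=I$ and $P_1(u)=0$ at once; so I would focus on $u\diamondsuit w$ with $u+w=e$ maximal.

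The next step is to pass from this geometric data to a linear direct sum decomposition of $Z$. Because $u+w=e$ is maximal, Lemma~\ref{decom} gives $F_e=F_u\oplus_{c}F_w$. Taking spans and using \eqref{yad} to write $\mbox{sp}\,Z_1=\mbox{sp}\,F_e$, I obtain $Z=\mbox{sp}\,F_u\oplus\mbox{sp}\,F_w$; since $F_u\diamondsuit F_w$ the two summands are orthogonal, and orthogonality is preserved under closure, so $Z=\overline{\mbox{sp}}\,F_u\oplus\overline{\mbox{sp}}\,F_w$. As $\overline{\mbox{sp}}\,F_u$ and $\overline{\mbox{sp}}\,F_w$ are exactly the ranges of the geometric Peirce $2$-projections $P_2(u)$ and $P_2(w)$, and since $P_2(u)$ annihilates $\overline{\mbox{sp}}\,F_w$ while acting as the identity on $\overline{\mbox{sp}}\,F_u$ (and symmetrically for $P_2(w)$), this decomposition reads $P_2(u)+P_2(w)=I$.

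The conclusion is then pure Peirce calculus. The projections $P_0(u),P_1(u),P_2(u)$ are mutually orthogonal idempotents summing to $I$, so $P_1(u)P_2(u)=0$ and $P_1(u)P_0(u)=0$; moreover, since $w$ is orthogonal to $u$, its $2$-space lies in the $0$-space of $u$, i.e. $P_2(w)=P_0(u)P_2(w)$ by \cite[Corollary 3.4]{fr2}. Hence $P_1(u)P_2(w)=P_1(u)P_0(u)P_2(w)=0$, and therefore $P_1(u)=P_1(u)\,I=P_1(u)\bigl[P_2(u)+P_2(w)\bigr]=0$.

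I expect the main obstacle to be not the final algebra but securing the two structural inputs cleanly: guaranteeing the existence of a maximal tripotent $e$ above $u$ (so that the unitarity hypothesis can be invoked and \eqref{yad} applied), and verifying that $w=e-u$ is a genuine geometric tripotent orthogonal to $u$ with $u+w$ maximal, which is precisely what makes Lemma~\ref{decom} applicable. Once the clean decomposition $P_2(u)+P_2(w)=I$ is in hand, the vanishing of $P_1(u)$ is forced.
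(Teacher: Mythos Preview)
Your proposal is correct and follows essentially the same route as the paper's own argument: dominate $u$ by a maximal (hence unitary) $e$, set $w=e-u$, use Lemma~\ref{decom} together with \eqref{yad} to obtain $Z=\overline{\mbox{sp}}\,F_u\oplus\overline{\mbox{sp}}\,F_w$ and hence $P_2(u)+P_2(w)=I$, and finish with the Peirce identity $P_2(w)=P_0(u)P_2(w)$ from \cite[Corollary 3.4]{fr2}. Your write-up is in fact slightly more careful than the paper's, since you explicitly justify the existence of the maximal $e$ via Zorn and separate out the trivial case $w=0$.
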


For orthogonal geometric tripotents $v_1, v_2$ we have
\begin{align}\label{ptwo}
P_2(v_1+v_2)=P_2(v_1)+P_2(v_2).
\end{align}

Indeed, by \cite[Lemma 1.8]{fr2} we have
\[
P_0(v_1+v_2)=P_0(v_1)P_0(v_2).
\]
Using the last equality and taking into account the equalities
$P_1(v_1)=P_1(v_2)=P_1(v_1+v_2)=0,$ together with  Corollary 3.4
of \cite{fr2}, we get
\begin{align*}
 P_2(v_1+v_2) & =I-P_0(v_1+v_2)=I^2- P_0(v_1+v_2)=\\
 & =(P_2(v_1)+P_0(v_1))(P_2(v_2)+P_0(v_2))- P_0(v_1)P_0(v_2)=\\
& = P_2(v_1)+P_2(v_2)+P_0(v_1)P_0(v_2)-P_0(v_1)P_0(v_2)=\\
& = P_2(v_1)+P_2(v_2).
\end{align*}

 Now we fix a unitary  $e\in \mathcal{GT}.$

On the space  $Z$ we define order (depending on $e$) by the
following rule:
\begin{align}\label{order}
x\geq y\, \Leftrightarrow\, x-y\in \mathbb{R}^+F_e.
\end{align}

\begin{lem}
\label{orsp} $Z$  is a partially ordered  linear space,  i.e.
\begin{enumerate}
\item $x\leq x;$

\item $x\leq y,\,  y\leq    z \Rightarrow  x\leq z;$

\item $x\leq y,\,  y\leq    x \Rightarrow  x=y;$

\item $x\leq y\,   \Rightarrow  x+z\leq y+z;$

\item $x\geq 0,\,  \lambda \geq 0 \Rightarrow  \lambda x\geq 0.$
\end{enumerate}
\end{lem}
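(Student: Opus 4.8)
The plan is to verify the five order axioms directly from the definition $x\geq y \Leftrightarrow x-y\in\mathbb{R}^+F_e$, treating $\mathbb{R}^+F_e=\{\lambda z:\lambda\geq 0,\ z\in F_e\}$ as the positive cone of the ordering. The substance of the lemma is that this set is a proper convex cone, so most axioms reduce to convexity, scaling, and the antisymmetry property $\mathbb{R}^+F_e\cap(-\mathbb{R}^+F_e)=\{0\}$. I would first record that $0\in\mathbb{R}^+F_e$ (taking $\lambda=0$), which immediately gives reflexivity (1), and that the cone is closed under nonnegative scaling, which is axiom (5) essentially by definition. Axiom (4) is trivial since $(x+z)-(y+z)=x-y$, so translation invariance is automatic.

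The real work is in transitivity (2) and antisymmetry (3), both of which hinge on the geometry of $F_e$. For transitivity I would show that $\mathbb{R}^+F_e$ is closed under addition: if $x-y=\lambda_1 z_1$ and $y-z=\lambda_2 z_2$ with $z_1,z_2\in F_e$ and $\lambda_i\geq 0$, then $x-z=\lambda_1 z_1+\lambda_2 z_2$, and I must exhibit this as $\mu w$ with $w\in F_e$, $\mu\geq 0$. The key point is that $F_e$ is a convex subset of the unit sphere lying in the hyperplane $\{x:\langle e,x\rangle=1\}$; writing $\mu=\lambda_1+\lambda_2$, the element $w=(\lambda_1 z_1+\lambda_2 z_2)/\mu$ is a convex combination of $z_1,z_2$, hence lies in $F_e$ by convexity of the face (the degenerate case $\mu=0$ forces $\lambda_1=\lambda_2=0$, giving $x=z$). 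This uses only that $F_e$ is a face, so it is genuinely convex, and that membership in $F_e$ is the condition $\langle e,\cdot\rangle=1$ together with $\|\cdot\|=1$; since $w$ is a convex combination of norm-one elements on which $e$ takes value $1$, both conditions survive.

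For antisymmetry (3) I would argue that $x\leq y$ and $y\leq x$ force $x-y\in\mathbb{R}^+F_e$ and $y-x\in\mathbb{R}^+F_e$ simultaneously, so it suffices to prove $\mathbb{R}^+F_e\cap(-\mathbb{R}^+F_e)=\{0\}$. Suppose $\lambda_1 z_1=-\lambda_2 z_2$ with $z_1,z_2\in F_e$ and $\lambda_1,\lambda_2>0$; applying $e$ gives $\lambda_1=\langle e,\lambda_1 z_1\rangle=-\langle e,\lambda_2 z_2\rangle=-\lambda_2<0$, a contradiction. Hence at least one of $\lambda_1,\lambda_2$ is zero, which forces $x-y=0$. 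This is the step where the sign-separating role of the unitary tripotent $e$ is essential, and it is the main obstacle in the sense that all the order-theoretic content is concentrated here; once the cone is shown to be proper, the remaining axioms are formal. I would present the five verifications in the enumerated order of the statement, deriving (1),(4),(5) in a sentence each, then spending the bulk of the proof on the additive closure for (2) and the cone-properness computation for (3).
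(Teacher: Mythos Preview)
Your proposal is correct and follows essentially the same route as the paper: (1), (4), (5) are dismissed as trivial, (3) is handled by applying $e$ to the equation $\alpha a+\beta b=0$ (equivalently $\lambda_1 z_1=-\lambda_2 z_2$) to force both coefficients to vanish, and (2) reduces to additive closure of the cone. The only difference is that you explicitly justify the additive closure in (2) via convexity of $F_e$, whereas the paper simply asserts $y-x,\,z-y\in\mathbb{R}^+F_e\Rightarrow z-x\in\mathbb{R}^+F_e$ without spelling this out.
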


\begin{proof}
The properties  (1), (4) and (5) are  trivial.

To prove (2), let  $x\leq y$ and $y\leq z.$ Then $y-x, \, z-y\in
\mathbb{R}^+F_e.$
 Thus  $z-x\in \mathbb{R}^+F_e,$
i.e. $x\leq z.$

For (3), let   $x\leq y,\,  y\leq    x.$ Then  $y-x=\alpha a$ and
$x-y=\beta b$ for some   $\alpha, \beta\geq 0$ and
  $a, b\in F_e.$
Therefore $\alpha a+ \beta b=0.$ Applying to this equality the
geometric tripotent  $e$ we obtain  $\alpha +\beta=0.$ Thus
$\alpha = \beta =0,$ i.e. $x=y.$ The proof is complete.
\end{proof}

\begin{rem}
Note that if  $v\leq e$ then  \cite[Lemma 2.4]{NR} implies that
\begin{align}\label{neal}
  P_k(v)(F_e)\subseteq F_e,\, k=0, 2.
\end{align}
\end{rem}

\begin{lem}
\label{uniq} Let
 $a, b, x, y\geq 0$ with
$a\diamondsuit  b.$  If $a-b= x-y$ then
$$
x-a=y-b\geq 0;
$$
in addition, if  $x\diamondsuit  y$ then  $x=a$ and $y=b.$
 \end{lem}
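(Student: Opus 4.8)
The plan is to reduce everything to the positivity of the geometric Peirce projections attached to the support tripotent of $a$, together with the characterization $z\geq 0\Leftrightarrow\langle e,z\rangle=\|z\|$ coming from the order \eqref{order}. The identity $x-a=y-b$ is immediate: rearranging $a-b=x-y$ gives $a+y=x+b$, hence $x-a=y-b$. Thus the whole content is to prove $x-a\geq 0$, after which the ``in addition'' clause follows from a short norm computation. First I would record the cone description: for nonzero $z$ one has $z\geq 0$ exactly when $\langle e,z\rangle=\|z\|$, so that $\mathbb{R}^+F_e$ is closed under addition (if $\langle e,p\rangle=\|p\|$ and $\langle e,q\rangle=\|q\|$ then $\|p+q\|\leq\|p\|+\|q\|=\langle e,p+q\rangle\leq\|p+q\|$), and every positive element has norm equal to its $e$-value.

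We may assume $a\neq 0$, the case $a=0$ giving $x-a=x\geq 0$ directly. Let $v:=v_a$ be the support geometric tripotent of $a$ furnished by the polar decomposition \cite[Theorem 4.3]{fr2}; then $a\in P_2(v)Z$ and $P_2(v)a=a$. Since $\langle e,a\rangle=\|a\|$, the minimality of the support gives $v\leq e$. Because $a\diamondsuit b$ we have $v_a\diamondsuit v_b$, and the Peirce calculus for orthogonal tripotents places $b\in P_2(v_b)Z\subseteq P_0(v)Z$, so $P_2(v)b=0$ and $P_0(v)b=b$.

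The key step uses \eqref{neal}: as $v\leq e$, both $P_2(v)$ and $P_0(v)$ preserve the cone $\mathbb{R}^+F_e$, i.e. they are positive; in particular $P_2(v)y\geq 0$ and $P_0(v)x\geq 0$. Applying $P_2(v)$ to $a-b=x-y$ and using $P_2(v)a=a$, $P_2(v)b=0$ yields $a=P_2(v)x-P_2(v)y$, whence
\[
P_2(v)x-a=P_2(v)y\geq 0.
\]
Since $P_1(v)=0$ by Lemma~\ref{P=0}, we have $x=P_2(v)x+P_0(v)x$, and therefore
\[
x-a=\big(P_2(v)x-a\big)+P_0(v)x=P_2(v)y+P_0(v)x,
\]
a sum of two positive elements, hence $\geq 0$. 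This proves $x-a=y-b\geq 0$.

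For the final clause, set $w:=x-a=y-b\geq 0$, so $x=a+w$ and $y=b+w$ while $x-y=a-b$. By cone additivity $\|x\|=\|a\|+\|w\|$ and $\|y\|=\|b\|+\|w\|$. If $x\diamondsuit y$ then $\|x-y\|=\|x\|+\|y\|$; but $\|x-y\|=\|a-b\|=\|a\|+\|b\|$ since $a\diamondsuit b$, so $\|a\|+\|b\|=\|a\|+\|b\|+2\|w\|$, forcing $w=0$, i.e. $x=a$ and $y=b$. The main obstacle is the middle step: one must correctly identify $a$ and $b$ with the Peirce $2$- and $0$-components of the support of $a$, and justify via \eqref{neal} that these projections are positive; once that localization is in place, the remaining estimates are pure bookkeeping with the cone $\mathbb{R}^+F_e$.
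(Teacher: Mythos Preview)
Your proof is correct and follows essentially the same route as the paper: both use the support tripotent $v_a\leq e$, apply $P_2(v_a)$ to $a-b=x-y$, invoke \eqref{neal} to get $P_2(v_a)y\geq 0$ and $P_0(v_a)x\geq 0$, and conclude via $P_1(v_a)=0$. The only difference is in the ``in addition'' clause: the paper argues by symmetry (swapping the roles of $(a,b)$ and $(x,y)$ to get $a\geq x$ as well), whereas you use a direct norm computation $\|a\|+\|b\|=\|a-b\|=\|x-y\|=\|x\|+\|y\|=\|a\|+\|b\|+2\|w\|$; both are short and valid.
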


\begin{proof}
Let $v_a$ be  the  smallest geometric tripotent such that
 $v_a(a)=||a||$ (polar decomposition). Since $a\geq 0$ it follows that $v_a\leq e.$
 Applying  the projection $P_2(v_a)$ to the equality
$a-b=x-y$ we obtain
$$
P_2(v_a)(x)-P_2(v_a)(y)= P_2(v_a)(a-b)=P_2(v_a)(a)=a.
$$
Using  \eqref{neal} we get
$$
P_2(v_a)(x)-a=P_2(v_a)(y)\in \mathbb{R}^+F_e,
$$
and therefore
\begin{align*}
x-a & =P_2(v_a)(x)+P_0(v_a)(x)-a=\\
& =[P_2(v_a)(x)-a]+P_0(v_a)(x)\in \mathbb{R}^+F_e,
\end{align*}
i.e. $x\geq a.$

Now suppose that
  $x\diamondsuit  y.$
Then as shown above,  $x\geq a$ and $a\geq x.$ Thus  $x=a$ and
$y=b.$ The proof is complete.
\end{proof}

\begin{lem}
\label{nor} For  $x\in Z$ the following conditions
 are equivalent:
\begin{enumerate}
\item $x\geq 0;$

\item $||x||=\langle e, x\rangle.$
\end{enumerate}
\end{lem}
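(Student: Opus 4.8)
The plan is to unwind the two definitions involved --- the order \eqref{order} and the face $F_e$ --- after which each implication becomes essentially a one-line consequence of the homogeneity of the norm together with the normalization $\|e\|=1$. I would begin by recording the elementary inequality that underlies both directions: since $e$ is a geometric tripotent it is in particular a projective unit, so $\|e\|=1$, and hence for every $x\in Z$ we have
\[
\langle e,x\rangle\leq|\langle e,x\rangle|\leq\|e\|\,\|x\|=\|x\|.
\]
Thus the difference $\|x\|-\langle e,x\rangle$ is always nonnegative, and condition (2) asserts precisely that it vanishes. The whole content of the lemma is then the identification of the positive cone $\mathbb{R}^+F_e$ with the set where this difference is zero.

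For the implication $(1)\Rightarrow(2)$, I would write $x\geq 0$ in the form $x=\lambda a$ with $\lambda\geq 0$ and $a\in F_e$, which is exactly what membership in $\mathbb{R}^+F_e$ means. Every element of $F_e$ has norm $1$ and satisfies $\langle e,a\rangle=1$, so homogeneity gives $\|x\|=\lambda\|a\|=\lambda$ and $\langle e,x\rangle=\lambda\langle e,a\rangle=\lambda$, whence the two coincide; the degenerate case $\lambda=0$ (that is, $x=0$) is covered automatically.

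For the converse $(2)\Rightarrow(1)$, I would argue by normalization. If $x=0$ then $x\in\mathbb{R}^+F_e$ trivially with $\lambda=0$. Otherwise set $\lambda=\|x\|=\langle e,x\rangle>0$ and $a=x/\lambda$; then $\|a\|=1$ and $\langle e,a\rangle=\langle e,x\rangle/\lambda=1$, so $a$ lies in the unit ball and attains the value $1$ under $e$, i.e.\ $a\in F_e$. Hence $x=\lambda a\in\mathbb{R}^+F_e$, which is $x\geq 0$.

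I do not anticipate a genuine obstacle: the argument is purely a matter of reading off the definitions and rescaling. The only point that warrants a moment's care is verifying that $a=x/\|x\|$ actually lands in $F_e$ rather than merely in the affine hyperplane $\{y:\langle e,y\rangle=1\}$; this is exactly where the \emph{equality} $\|x\|=\langle e,x\rangle$ (as opposed to the generic inequality above) is needed, since it forces $\|a\|=1$ and thereby places $a$ in the unit ball.
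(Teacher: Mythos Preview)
Your proposal is correct and follows essentially the same approach as the paper: both directions are handled by writing $x=\lambda a$ with $a\in F_e$ (for $(1)\Rightarrow(2)$) and by normalizing $a=x/\|x\|$ and checking $a\in F_e$ (for $(2)\Rightarrow(1)$). Your version is slightly more detailed in recording the background inequality $\langle e,x\rangle\le\|x\|$ and in treating the case $x=0$ explicitly, but the argument is the same.
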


\begin{proof}
Take  $x\geq 0,$ i.e.  $x=\alpha y$ for some $\alpha\geq 0$ and
$y\in F_e.$ Then
$$
||x||=||\alpha y||=\alpha ||y||=\alpha =\alpha \langle e, y\rangle
= \langle e, x\rangle.
$$

Conversely, if    $||x||=\langle e, x\rangle, x\neq 0,$ then
$\frac{\textstyle x}{\textstyle ||x||}
 \in F_e,$ i.e. $x\geq 0.$
The proof is complete.
 \end{proof}

\begin{lem}
\label{uniqe} Every  element  $x\in Z$ can be  uniquely
represented as a sum
 $$
 x=x_+-x_-,
 $$
where
 $x_+, x_- \geq 0$
and  $x_+\diamondsuit  x_-.$
\end{lem}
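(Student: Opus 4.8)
The plan is to split the statement into existence and uniqueness, with uniqueness being an immediate consequence of Lemma~\ref{uniq} and existence resting on the polar decomposition together with the Peirce-type splitting furnished by Lemmas~\ref{JDM} and \ref{P=0} and the neutrality of the space.

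For existence, assume $x\neq 0$ and let $v=v_x$ be the geometric tripotent given by the polar decomposition, so that $\langle v,x\rangle=\|x\|$. By Lemma~\ref{JDM} I would write $v=v_1-v_2$ with $v_1,v_2\leq e$ mutually orthogonal, where $e$ is the fixed unitary, and put $w=v_1+v_2\leq e$. Since $v_1,v_2\leq w$ we have $P_2(w)^\ast v_i=v_i$, hence $P_2(w)^\ast v=v$ and therefore $\langle v,P_2(w)x\rangle=\langle v,x\rangle=\|x\|$; as $P_2(w)$ is contractive this forces $\|P_2(w)x\|=\|x\|$, and neutrality of $P_2(w)$ yields $P_2(w)x=x$. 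Using the additivity \eqref{ptwo} of the Peirce $2$-projections on orthogonal tripotents, I obtain $x=P_2(v_1)x+P_2(v_2)x$, and I set $x_+=P_2(v_1)x$ and $x_-=-P_2(v_2)x$, so that $x=x_+-x_-$.

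The delicate point is to produce orthogonality and positivity in tandem. For orthogonality: because $v_1\diamondsuit v_2$, Corollary~3.4 of \cite{fr2} gives $P_2(v_2)=P_0(v_1)P_2(v_2)$, so $-x_-\in P_0(v_1)(Z)=F_{v_1}^\diamondsuit$ while $x_+\in P_2(v_1)(Z)=\overline{\mathrm{sp}}\,F_{v_1}$; since these geometric Peirce $2$- and $0$-spaces of $v_1$ are mutually orthogonal (a geometric Peirce relation, \cite{fr1,fr2}), I conclude $x_+\diamondsuit x_-$, and in particular $\|x\|=\|x_+\|+\|x_-\|$. For positivity: since $v_i\leq e$, one checks that $P_2(v_i)^\ast e=v_i$, whence $\langle e,x_+\rangle=\langle v_1,x\rangle$ and $\langle e,x_-\rangle=-\langle v_2,x\rangle$. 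Now the polar relation reads $\langle v_1,x\rangle-\langle v_2,x\rangle=\|x\|$, and the trivial estimates $\langle v_1,x\rangle\leq\|x_+\|$ and $-\langle v_2,x\rangle\leq\|x_-\|$ sum to $\|x\|\leq\|x_+\|+\|x_-\|=\|x\|$; equality throughout forces $\langle e,x_+\rangle=\|x_+\|$ and $\langle e,x_-\rangle=\|x_-\|$, so Lemma~\ref{nor} gives $x_+,x_-\geq 0$.

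Uniqueness is then immediate: if $x=x_+-x_-=a-b$ with $x_+,x_-,a,b\geq 0$ and each pair orthogonal, then Lemma~\ref{uniq} (applied with $a,b$ as the orthogonal pair and $x_+,x_-$ as the competing decomposition) yields $x_+=a$ and $x_-=b$. I expect the main obstacle to be exactly the joint derivation in the third paragraph — specifically, justifying the orthogonality of the Peirce $2$- and $0$-spaces of $v_1$ and then exploiting it, through additivity of the norm on orthogonal elements, to pin down $\langle e,x_\pm\rangle=\|x_\pm\|$. The degenerate cases $x=0$, and $v_1=0$ or $v_2=0$ (where one summand vanishes), are handled trivially.
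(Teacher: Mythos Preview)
Your proposal is correct and follows essentially the same approach as the paper: polar decomposition, Lemma~\ref{JDM} to split $v_x=v_1-v_2$, and $x_\pm$ defined via the Peirce $2$-projections of $v_1,v_2$, with uniqueness coming from Lemma~\ref{uniq}. The only minor difference is in how positivity is obtained: the paper directly invokes the proof of \cite[Theorem~4.3(d)]{fr2} to get $\langle v_1,x\rangle=\|P_2(v_1)x\|$ (and likewise for $v_2$), whereas you first establish $x_+\diamondsuit x_-$ and then use norm additivity to force equality in $\langle v_1,x\rangle\leq\|x_+\|$, $-\langle v_2,x\rangle\leq\|x_-\|$; both routes are valid and yield the same conclusion.
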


\begin{proof}
Take the  smallest geometric tripotent $v_x\in \mathcal{GT}$ such
that
 $v_x(x)=||x||.$
By Lemma  \ref{JDM} there exist  mutually orthogonal geometric
tripotents
  $v_1, v_2\leq e$
such that $v_x=v_1-v_2.$ Put
$$
x_+=P_2(v_1)(x),\, x_-= - P_2(v_2)(x).
$$
By the proof of \cite[Theorem 4.3 (d)]{fr2} we get $\langle v_1,
x\rangle=||P_2(v_1)(x)||,$ and therefore
\begin{align*}
\langle e, x_+\rangle & =\langle e, P_2(v_1)(x)\rangle = \langle
P_2^\ast(v_1)e, x\rangle=\\
& = \langle v_1, x\rangle=||P_2(v_1)(x)||=||x_+||.
\end{align*}
  This means
that  $x_+\geq 0.$ Similarly $x_-\geq 0.$ Further using equality
\eqref{ptwo} we find  that $x=x_+-x_-$ and
 $x_+\diamondsuit x_-.$
Uniqueness follows  from Lemma  \ref{uniq}. The proof is complete.
\end{proof}

\begin{lem}
\label{latt}
  $Z$ is a lattice, i.e. for any   $x, y\in Z$ there exist
$$
x\vee y,\, x\wedge y\in Z.
$$
\end{lem}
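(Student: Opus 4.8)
The plan is to reduce the lattice property to the existence of a single supremum, namely $x\vee 0$, and then to identify it explicitly using the Jordan-type decomposition already established. By translation invariance of the order (Lemma~\ref{orsp}(4)), once $w\vee 0$ is known to exist for every $w\in Z$ we get the supremum of an arbitrary pair as $x\vee y=\big((x-y)\vee 0\big)+y$, and the infimum as $x\wedge y=-\big((-x)\vee(-y)\big)$. Indeed, for any $u$ one has $u\geq x$ and $u\geq y$ iff $u-y\geq(x-y)\vee 0$, which gives the first formula, and $\inf$ is obtained from $\sup$ by negation. So it suffices to produce $x\vee 0$ for an arbitrary $x\in Z$.

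First I would invoke Lemma~\ref{uniqe} to write $x=x_+-x_-$ with $x_+,x_-\geq 0$ and $x_+\diamondsuit x_-$, and propose $x_+$ as the candidate for $x\vee 0$. That $x_+$ is an upper bound of $\{x,0\}$ is immediate: $x_+\geq 0$ by construction, and $x_+-x=x_-\geq 0$ shows $x_+\geq x$.

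The substantive step, and the one I expect to be the only real obstacle, is showing that $x_+$ is the \emph{least} upper bound. Let $z\in Z$ satisfy $z\geq 0$ and $z\geq x$, so that both $z$ and $z-x$ are nonnegative. Since
$$
x_+-x_-=x=z-(z-x),
$$
I would apply Lemma~\ref{uniq} with $a=x_+$, $b=x_-$ (orthogonal nonnegative elements with $a-b=x$) and the nonnegative pair $z$, $z-x$ in the roles of its $x,y$. Its conclusion reads $z-x_+=(z-x)-x_-\geq 0$, which is exactly $z\geq x_+$. Hence $x_+$ is dominated by every upper bound, so $x\vee 0=x_+$ exists.

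Combining the two steps, $x\vee y=\big((x-y)\vee 0\big)+y$ and $x\wedge y=-\big((-x)\vee(-y)\big)$ exist for all $x,y\in Z$, which is the assertion. The genuine content is entirely contained in Lemma~\ref{uniq}; the surrounding reductions are routine consequences of the ordered-vector-space axioms of Lemma~\ref{orsp} together with the decomposition of Lemma~\ref{uniqe}.
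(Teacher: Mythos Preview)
Your proof is correct and follows essentially the same route as the paper: both decompose $x-y$ via Lemma~\ref{uniqe} into orthogonal nonnegative parts and then invoke Lemma~\ref{uniq} to check the least-upper-bound property. The only difference is cosmetic: you first reduce to $x\vee 0=x_+$ by translation, whereas the paper writes the general formula $x\vee y=\tfrac{x+y+a+b}{2}$ directly (which, since $x-y=a-b$, equals $y+a=(x-y)_++y$, i.e.\ your expression).
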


\begin{proof}
By Lemma \ref{uniqe} there exist  mutually orthogonal elements
 $a, b \geq 0$ such that
 $x-y=a-b.$
Then
\begin{align}\label{mx}
x\vee y=\frac{x+y+a+b}{2},
\end{align}
\begin{align}\label{min}
x\wedge y=\frac{x+y-a-b}{2}.
\end{align}
Indeed,
\begin{align*}
 x\vee y-x & =\frac{x+y+a+b}{2}-x= \\
& =\frac{y-x+a+b}{2}=b\geq 0
\end{align*}
 and
\begin{align*}
x\vee y-y & =\frac{x+y+a+b}{2}-y= \\
 & =\frac{x-y+a+b}{2}=a\geq 0.
\end{align*}

Now let  $x, y\leq z,$ where $z\in Z.$ Denote
$$
x_1=z-x\geq 0,\, y_1=z-y\geq 0.
$$
Thus $x-y=y_1-x_1.$ Therefore $y_1-x_1=a-b.$ Lemma  \ref{uniq}
implies that
$$
y_1-a=x_1-b\geq 0.
$$
Further
$$
z-x\vee y=\frac{x+y+x_1+y_1}{2}-\frac{x+y+a+b}{2}=
$$
$$
=\frac{x_1+y_1-a-b}{2}=y_1-a\geq 0.
$$
This means that
$$
x\vee y=\frac{x+y+a+b}{2}.
$$
In the same way we can prove  equality   \eqref{min}. The proof is
complete.
\end{proof}

A Banach lattice   $X$ is said to be \textit{abstract  $L$-space}
if
$$
||x+y||=||x||+||y||
$$
for all   $x, y\in X$ with $x\wedge y =0$  (see  \cite[p. 14]{Lin}
and \cite{kak}).

\begin{lem}
\label{main}
  $Z$ is an abstract   $L$-space.
\end{lem}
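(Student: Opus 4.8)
The plan is to verify the defining property of an abstract $L$-space: for $x,y \in Z$ with $x \wedge y = 0$, show $\|x+y\| = \|x\| + \|y\|$. First I would observe that the lattice structure from Lemma~\ref{latt}, together with the unique orthogonal decomposition from Lemma~\ref{uniqe}, should force $x$ and $y$ to be orthogonal in the $\diamondsuit$ sense precisely when $x \wedge y = 0$ and both are positive. Concretely, I expect to reduce to the case $x, y \geq 0$: if $x \wedge y = 0$ then in particular $x, y \geq x \wedge y = 0$, so positivity is automatic. By Lemma~\ref{nor}, positivity of $x$ and $y$ means $\|x\| = \la e, x\ra$ and $\|y\| = \la e, y\ra$, and since $x + y \geq 0$ as well, also $\|x+y\| = \la e, x+y\ra = \la e, x\ra + \la e, y\ra = \|x\| + \|y\|$ by linearity of $e$.

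That computation is nearly immediate, so the real content is to confirm that the hypothesis $x \wedge y = 0$ does indeed force $x, y \geq 0$ and hence $x+y \geq 0$, and more fundamentally that the norm-additivity I want genuinely characterizes an $L$-space in the stated form. The cleaner route is: given arbitrary $x, y \in Z$ with $x \wedge y = 0$, use equation~\eqref{min} from the proof of Lemma~\ref{latt}, which expresses $x \wedge y = \tfrac{1}{2}(x + y - a - b)$ where $a, b \geq 0$, $a \diamondsuit b$, and $x - y = a - b$. Setting this to zero gives $x + y = a + b$. Then $x = \tfrac{1}{2}((x+y)+(x-y)) = \tfrac{1}{2}((a+b)+(a-b)) = a$ and similarly $y = b$. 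Hence $x = a \geq 0$ and $y = b \geq 0$ with $x \diamondsuit y$.

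Once I know $x = a$ and $y = b$ are orthogonal positive elements, the norm identity drops out directly from the definition of $\diamondsuit$: since $a \diamondsuit b$ we have by definition $\|a + b\| = \|a\| + \|b\|$, i.e. $\|x + y\| = \|x\| + \|y\|$, which is exactly what is required. So the proof splits into an algebraic step (solving $x \wedge y = 0$ to identify $x, y$ with the canonical orthogonal pair $a, b$) and a one-line appeal to orthogonality.

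The step I expect to be the main obstacle is the identification $x = a$, $y = b$ from $x \wedge y = 0$, because it requires being careful that the representation $x - y = a - b$ with $a \diamondsuit b$ and $a, b \geq 0$ is the \emph{unique} one guaranteed by Lemma~\ref{uniqe} applied to $x - y$, and that substituting back into \eqref{min} is legitimate. I should double-check that the $a, b$ appearing in the lattice formulas are exactly the positive and negative parts of $x - y$, so that the solution of $x + y = a + b$ combined with $x - y = a - b$ is forced. After that, Lemma~\ref{latt} and the $\diamondsuit$-definition finish everything with no further estimates. Thus $Z$ is an abstract $L$-space.
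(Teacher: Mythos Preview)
Your computation of the $L$-norm identity is fine and coincides with the paper's: once $x,y\ge 0$, Lemma~\ref{nor} gives $\|x\|=\langle e,x\rangle$, $\|y\|=\langle e,y\rangle$, and $\|x+y\|=\langle e,x+y\rangle=\|x\|+\|y\|$. Your detour through \eqref{min} to identify $x=a$, $y=b$ and conclude $x\diamondsuit y$ is correct but unnecessary, since the paper (and your own first paragraph) already obtains additivity for \emph{all} $x,y\ge 0$ directly from the functional $e$, without needing $x\wedge y=0$ at all.

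There is, however, a genuine gap. An abstract $L$-space is by definition a \emph{Banach lattice} satisfying the norm-additivity condition, and you never verify that $(Z,\|\cdot\|)$ is a Banach lattice, i.e.\ that the norm is a lattice norm. The paper does this explicitly before turning to additivity: it checks that $0\le x\le y$ implies $\|x\|\le\|y\|$ (immediate from $\|x\|=\langle e,x\rangle\le\langle e,y\rangle=\|y\|$) and that $\|\,|x|\,\|=\|x\|$ (since $x_+\diamondsuit x_-$ gives $\|x_++x_-\|=\|x_+-x_-\|$). Without these, Lemma~\ref{latt} only gives a vector lattice that happens to carry a complete norm, not a Banach lattice, and the appeal to Kakutani's theorem in the proof of Theorem~\ref{MTH} would not be justified. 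The missing steps are short, but they are not optional.
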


\begin{proof} First we show that
\begin{align*}
0\leq x\leq y \Rightarrow ||x||\leq ||y||;
\end{align*}
\begin{align*}
||x||=||\,|x|\,||,
\end{align*} where $|x|=x_++x_-$ is the
absolute value of $x.$

Let $0\leq x\leq y.$ Then
\begin{align*}
||x||=\langle e, x\rangle\leq \langle e, y\rangle=||y||,
\end{align*} i.e.
\begin{align*}
||x||\leq ||y||.
\end{align*}
Further
\begin{align*}
|||x||| & =||x_++x_-||=[x_+\,\diamondsuit \,x_-]=\\
& =||x_+-x_-||=||x||.
\end{align*}
Hence $Z$ is a Banach lattice.

For    $x, y\geq 0,$ using Lemma  \ref{nor} we obtain
\begin{align*}
||x+y||=\langle e, x+y\rangle= \langle e, x\rangle+\langle e,
y\rangle= ||x||+||y||.
\end{align*}
 This means that  $Z$ is an abstract
$L$-space. The proof is complete.
\end{proof}

Now  Theorem  \ref{MTH} follows from Lemma \ref{main} and
\cite[Theorem  1.b.2.]{Lin}.

\begin{rem}
The following observations were kindly suggested by the referee,
to whom the authors are deeply indebted.

Theorem~\ref{MTH} fails for complex spaces. Indeed, by
\cite[Theorem 2.11]{fr3} for any finite von Neumann algebra its
predual  is a neutral strongly facially symmetric space in which
every maximal geometric  tripotent is unitary. However, that
predual is not isometric to an $L_1$-space, for example for the
algebra $B(H)$ of all bounded linear operators on the finite
dimensional  Hilbert space $H$ of dimension at least  $2.$

The predual of a real $JBW^\ast$-triple is a neutral weakly
facially symmetric space   (see \cite[Theorem~5.5]{ER} and
\cite[Theorem 3.1]{fr3}) which is not strongly facially symmetric.
The strong
  facial symmetry of the predual of a complex von Neumann algebra
  depends on the field being complex (see  the proof of Corollary 2.9
in \cite{fr3}). Indeed, if the predual of a non commutative real
von Neumann algebra were a strongly facially symmetric space, this
would contradict Theorem~\ref{MTH} above.
\end{rem}

\subsection*{Acknowledgments}

The authors would like to thank the referee for valuable comments
and suggestions.


\begin{thebibliography}{HD}




\normalsize \baselineskip=17pt



 \bibitem{AS}
 E. M. Alfsen and F. W. Shultz,  \emph{Geometry of state
 spaces of operator algebras},
Birkhauser,  2003.



 \bibitem{AY}
 Sh. A. Ayupov and N. Zh.  Yadgarov,
\emph{Geometry of the state space of modular Jordan algebras},
    Russian Acad. Sci. Izv. Math.
    43 (1994), 581--592.


\bibitem{ER} E. M. Edwards and G. T. Ruttimann,
\emph{The facial and inner structure of a real $JBW^\ast$-triple},
Math. Nach. 222 (2001), 159--184.


 \bibitem{fr1}
 Y. Friedman and B. Russo,
\emph{A geometric spectral  theorem},
    Quart.~J. Math. Oxford,
    37 (1986), 263--277.



\bibitem{fr2}
Y. Friedman and B. Russo, \emph{Affine structure of facially
symmetric spaces}, Math. Proc. Camb. Philos. Soc. 106 (1989),
107--124.



\bibitem{fr3}
Y. Friedman and B. Russo, \emph{Some affine geometric aspects of
operator
 algebras},
   Pac. J. Math.,   137 (1989), 123--144.


\bibitem{fr4}
Y. Friedman and B. Russo, \emph{Geometry of the Dual ball of the
Spin Factor},
 Proc. Lon. Math. Soc.,
65 (1992), 142--174.
  1992.






\bibitem{fr5}
Y. Friedman and B. Russo, \emph{Classification of atomic facially
symmetric spaces},
 Canad. J. Math.,
 45 (1993), 33--87.



\bibitem{MFAT} M. M. Ibragimov,
S. J. Tleumuratov and J. X. Seypullaev,
 \emph{Some geometric properties of a strongly
 facially spaces},
 Methods Func.  Anal. Topol.  11 (2005)  234--238.



 \bibitem{Lin}
J. Lindenstrauss and L.  Tzafriri, \emph{Classical banach spaces},
II. Function spaces.
 Springer,  1979.






\bibitem{kak}
S. Kakutani,  \emph{Concrete represantation of abstract $L$-spaces
and mean the ergodic theorem},
 Ann. of
Math. 42  (1941), 523--537.


\bibitem{NR}
M. Neal and B.  Russo, \emph{State space of $JB^\ast$-triples},
 Math. Ann.,   328 (2004)  585--624.


 \end{thebibliography}
\end{document}